\theoremstyle{plain}
\newtheorem{theorem}{Theorem}[section]
\newtheorem{proposition}[theorem]{Proposition}
\newtheorem{corollary}[theorem]{Corollary}
\theoremstyle{definition}
\newtheorem{setup}[theorem]{Setup}
\newtheorem{definition}[theorem]{Definition}
\newtheorem{example}[theorem]{Example}
\theoremstyle{remark}
\newcommand{\sk}{{\ensuremath{\sf k }}}
\newcommand{\m}{\ensuremath{\mathfrak m}}
\def\m{\mathfrak m}
\DeclareMathOperator{\rank}{rank}
\DeclareMathOperator{\Hom}{Hom}
\begin{document}

\title[Almost Gorenstein determinantal rings]{Almost Gorenstein determinantal rings \\ of symmetric matrices}

\author[Ela Celikbas]{Ela Celikbas}
\address{Department of Mathematics, West Virginia University, Morgantown, WV 26506}
\email{ela.celikbas@math.wvu.edu}

\author[Naoki Endo]{Naoki Endo}
\address{Department of Mathematics, Faculty of Science, Tokyo University of Science, 1-3 Kagurazaka, Shinjuku, Tokyo 162-8601, Japan}
\email{nendo@rs.tus.ac.jp}
\urladdr{https://www.rs.tus.ac.jp/nendo/}

\author[Jai Laxmi]{Jai Laxmi}
\address{School of Mathematics, Tata Institute of Fundamental Research,
Mumbai-400005, India.}
\email{laxmiuohyd@gmail.com}
\email{jailaxmi@math.tifr.res.in}

\author[Jerzy Weyman]{Jerzy Weyman}
\address{Instytut Matematyki, Jagiellonian University, Krak\'ow 30--348}
\email{jerzy.weyman@uj.edu.pl}

\thanks{2020 {\em Mathematics Subject Classification.} 13H10, 13H15, 13D02.}
\thanks{{\em Key words and phrases.} Almost Gorenstein local ring, Almost Gorenstein graded ring, determinantal ring}
\thanks{N. Endo was partially supported by JSPS Grant-in-Aid for Young Scientists 20K14299. J. Laxmi was supported by Fulbright-Nehru fellowship. J. Weyman was partially supported by NSF grant DMS 1802067, Sidney Professorial Fund, and Polish National Agency for Academic Exchange.}

\begin{abstract}
We provide a characterization of the almost Gorenstein property of determinantal rings of a symmetric matrix of indeterminates over an infinite field. We give an explicit formula for ranks of the last two modules in the resolution of determinantal rings using Schur functors. 
\end{abstract}

\maketitle

\setlength{\baselineskip}{15.5pt}

\section{Introduction}\label{Intro}

{\it An almost Gorenstein ring} is, one of the good candidates for generalization of Gorenstein rings, defined by an existence of embedding of the rings into their canonical modules whose cokernel is {\it an Ulrich module}, i.e, the multiplicity is equal to the number of generators. The motivation of this generalization comes from the strong desire to stratify Cohen-Macaulay rings, finding new and interesting classes which naturally extend the Gorenstein rings. The theory of almost Gorenstein rings was introduced by Barucci and Fr{\"o}berg \cite{BF97} in the case where the local rings are analytically unramified and of dimension one. Their work inspired Goto, Matsuoka, and Phuong \cite{GMP} to give a modified definition of the one-dimensional almost Gorenstein local rings in 2013. Two years later Goto, Takahashi and the second author of this paper \cite{GTT} defined the almost Gorenstein graded/local rings of arbitrary dimension. In 2017, the second author studied the question of when the determinantal rings of generic matirices are almost Gorenstein rings. 
The goal of this paper is to provide a characterization of the almost Gorenstein property of determinantal rings of a symmetric matrix. 


Let $\sk$ be an infinite field and let $n>0$ be an integer. Let $X=[X_{ij}]$ be a generic $n\times n$ symmetric matrix of indeterminates over $\sk$, i.e., $X_{ij}=X_{ji}$ for all $1 \leq i, j \leq n$. We denote by $S=\sk[X]$ the polynomial ring over $\sk$ generated by $n(n+1)/2$ variables $\{X_{ij}\}_{1 \leq i, j \leq n}$.  We  consider $S$ as a $\Bbb Z$-graded ring under the grading $S_0 = \sk$ and $X_{ij} \in S_1$ for all $1 \leq i, j \leq n$. 
Let $I_{t+1}(X)$ be the ideal of $S$ generated by $(t+1)\times(t+1)$ minors of the matrix $X$ where $0< t < n$ is an integer. We set 
$
R=S/I_{t+1}(X)
$
and call it {\it the determinantal ring of $X$}. By Kutz \cite[Theorem 1]{Kutz}, the ring $R$ is known to be a Cohen-Macaulay integral domain with $\dim R=nt-\frac{1}{2}t(t-1)$. Goto shows in \cite{Goto79} that $R$ is Gorenstein if and only if $n-t$ is odd.


With this notation, we state the main result of this paper below, where $\mathfrak m=R_{+}$ denotes the graded maximal ideal of $R$.

\begin{theorem}\label{mainthm}
The following conditions are equivalent. 
\begin{enumerate}
\item[$(1)$] $R$ is an almost Gorenstein graded ring.
\item[$(2)$] $R_{\mathfrak m}$ is an almost Gorenstein local ring.
\item[$(3)$] Either $n-t$ is odd, or $n=3$, $t=1$.
\end{enumerate}
\end{theorem}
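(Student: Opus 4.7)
The plan is to handle the three implications separately.

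The equivalence $(1) \Leftrightarrow (2)$ is a formal consequence of the general framework in \cite{GTT}: for a Cohen-Macaulay graded $\sk$-algebra, the almost Gorenstein graded property and the almost Gorenstein local property at the irrelevant maximal ideal coincide, because the Ulrich cokernel in the defining exact sequence descends to the localization and, by the graded Nakayama lemma, lifts back to a graded exact sequence.

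For $(3) \Rightarrow (1)$ there are two cases. When $n - t$ is odd, Goto's theorem \cite{Goto79} gives that $R$ is Gorenstein, so the almost Gorenstein condition holds trivially with cokernel $C = 0$. When $(n,t) = (3,1)$, the map $X_{ij} \mapsto y_i y_j$ identifies $R$ with the second Veronese subring $V = \sk[y_1,y_2,y_3]^{(2)}$, since every rank-one symmetric $3 \times 3$ matrix is of the form $v v^T$. For this ring $\omega_V$ is well understood as a module generated by the three linear forms $y_1, y_2, y_3$ (after an appropriate degree shift), and one writes down the candidate embedding $V \hookrightarrow \omega_V$ explicitly, verifying the Ulrich condition on the cokernel by exhibiting a linear system of parameters and checking $\mathfrak{m} C = Q C$ together with $\mu(C) = e(C)$.

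For $(2) \Rightarrow (3)$, assume $R$ is almost Gorenstein and $n - t$ is even, so that $R$ is not Gorenstein and the defining exact sequence
$$0 \to R \to \omega_R(-a(R)) \to C \to 0$$
has a nonzero Ulrich cokernel $C$. From the formula for the ranks of the last two modules in the minimal free resolution of $R$ over $S$ (the main technical input of the paper, given via Schur functors), one reads off the Cohen-Macaulay type $r(R) = \mu_R(\omega_R)$ and the structure of the top-degree component of $\omega_R$. Comparing the exact sequence in degree zero then gives $\mu(C) = r(R) - 1$, and combining this with the Ulrich equality $\mu(C) = e(C)$, the multiplicity formula for $R$, and the value of $a(R)$ yields a numerical identity in $(n,t)$. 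One then argues that among $(n,t)$ with $n - t$ even this identity forces $(n,t) = (3,1)$.

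The principal obstacle is the final numerical elimination: the multiplicity of a symmetric determinantal ring is a product of Weyl-type factors, while the type arising from the Schur-functor resolution is a sum of dimensions of Schur modules, so the two sides do not manifestly compare. The plan is to treat $n - t = 2$ separately (where $(3,1)$ appears as the only solution) and then exclude $n - t \geq 4$ even by a monotonicity argument in $n$ for fixed $n - t$, using uniform bounds on $r(R)$ and $e(R)$ coming from the Schur-functor description. An additional subtlety is that the AG condition already requires $\omega_R$ to be minimally generated in a single degree (otherwise $C$ cannot be generated in one degree), which may itself rule out large families before the multiplicity comparison is invoked.
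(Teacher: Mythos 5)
Your overall strategy (close the cycle $(3)\Rightarrow(1)\Rightarrow(2)\Rightarrow(3)$, with the ranks of the last two free modules as the technical engine) matches the paper, but there are two genuine problems.

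First, your claim that $(1)\Leftrightarrow(2)$ is ``a formal consequence of the general framework'' is false. Only $(1)\Rightarrow(2)$ is formal (localize the graded sequence $0\to R\to K_R(-a)\to C\to 0$). The converse fails in general: an Ulrich cokernel over $R_{\mathfrak m}$ does not lift to a \emph{graded} embedding $R\to K_R(-a)$ with the prescribed shift by $a(R)$, and the paper explicitly cites counterexamples (\cite[Theorems 2.7, 2.8]{GMTY2}, \cite[Example 8.8]{GTT}). Your graded Nakayama argument cannot produce the required degree normalization. This error happens to be harmless to your logical structure, since $(2)\Rightarrow(1)$ follows from $(2)\Rightarrow(3)\Rightarrow(1)$ once those are proved, but the assertion itself should be removed.

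Second, and more seriously, your sketch of $(2)\Rightarrow(3)$ is missing the key mechanism and points the numerics in the wrong direction. The paper does not compare $e(R)$ with $r(R)$, and no ``Weyl-type product versus sum of Schur dimensions'' comparison is needed. The actual argument bounds the single quantity $\mu_R(\mathfrak m K_R)$ from two sides: from below by $\mu_R(\mathfrak m)\cdot\rank F_{\ell}-\rank F_{\ell-1}$, which uses that all entries of the last differential $F_\ell\to F_{\ell-1}$ are \emph{linear} (\cite[Proposition 2.5]{T}) together with $\rank F_\ell=\binom{n}{t}$ and $\rank F_{\ell-1}=n\binom{n}{t+1}-\binom{n}{t+2}$; and from above by $\mu_A(\mathfrak n)+(d-1)(r(A)-1)$, which uses the exact sequence $0\to\mathfrak n\varphi(1)\to\mathfrak n K_A\to\mathfrak n C\to 0$ and the Ulrich equality $\mathfrak n C=(f_1,\ldots,f_{d-1})C$. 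Comparing the two bounds gives a clean inequality that, after multiplying by $(t+2)!$, yields $t(t+1)\le 6$, hence $t\le 2$; the case $t=2$ is excluded directly and $t=1$ forces $n=3$. Your proposed ``monotonicity in $n$ for fixed $n-t$'' elimination has no identified inequality to be monotone in, so as written the final step does not close. Your Veronese argument for $(3)\Rightarrow(1)$ when $(n,t)=(3,1)$ is a legitimate alternative to the paper's route (which instead verifies $a(R)=1-\dim R$ via $\mathfrak m^2=Q\mathfrak m$ and invokes the Goto--Takahashi--Taniguchi criterion), but it is only sketched and the degree bookkeeping for $\omega_V$ would need to be carried out.
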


 Theorem~\ref{mainthm} gives the following invariant-theoretic application. If the field $\sk$ has characteristic 0, then the determinantal ring appears as a ring of invariants. Suppose that $V$ is an $n\times t$ matrix of indeterminates over $\sk$ and $A=\sk[V]$ is the polynomial ring over $\sk$. Let $G=O(t,\sk)$ be the orthogonal group. Assume that the group $G$ acts on the ring $A$ as $\sk$-automorphisms by taking $V$ onto $VH^{-1}$ for every $H\in G$. Then the ring $A^G$ of invariants is generated by the entries of the $n\times n$ symmetric matrix $Y=VV^T$ and the ideal of relations on $Y$ is generated by the $(t+1)\times (t+1)$ minors of $Y$; see \cite{CP76}. Hence we get the following. 
 
\begin{corollary}
Let $A$ and $G$ be given as above. Then $A^G=\sk[Y]$ is an almost Gorenstein graded ring if and only if either $n-t$ is odd, or $n=3$, $t=1$.
\end{corollary}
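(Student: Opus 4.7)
My plan is the following. For the equivalence (1)$\Leftrightarrow$(2), I would invoke the general fact, due to Goto-Takahashi-Taniguchi, that for a standard graded Cohen-Macaulay algebra over an infinite field the almost Gorenstein graded property is equivalent to the almost Gorenstein local property at the irrelevant maximal ideal. For the implication (3)$\Rightarrow$(1): when $n-t$ is odd, Goto's theorem \cite{Goto79} gives that $R$ is Gorenstein, hence almost Gorenstein; when $(n,t)=(3,1)$, the ring $R = S/I_2(X)$ is the second Veronese subring of $\sk[x,y,z]$, i.e.\ the homogeneous coordinate ring of the Veronese surface in $\mathbf{P}^5$. Here I would compute $\omega_R$ and exhibit a specific homogeneous inclusion $R \hookrightarrow \omega_R(-a)$, then verify via a Hilbert-series calculation that its cokernel is Ulrich.

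The bulk of the work is (1)$\Rightarrow$(3): assuming $n-t$ is even and $(n,t) \neq (3,1)$, show that $R$ is not almost Gorenstein. My strategy is the standard reduction to dimension one. For a graded Cohen-Macaulay algebra with infinite residue field, dividing out $d-1$ general linear forms (where $d = \dim R = nt - \tfrac{1}{2}t(t-1)$) preserves the almost Gorenstein graded property and produces a one-dimensional ring with the same $h$-vector and the same Cohen-Macaulay type as $R$. In dimension one the almost Gorenstein condition is equivalent to a sharp numerical equality involving the $h$-vector, the type $\tau(R)$, the multiplicity, and the generating degrees of the canonical module. I would combine (i) the known Hilbert series of the symmetric determinantal ring $R$ with (ii) the explicit Schur-functor formula for the ranks of the last two modules in the minimal $S$-resolution of $R$ (the other main contribution of the paper) to obtain closed-form expressions for both sides of the candidate almost Gorenstein equality and show that it fails in the excluded range.

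The principal obstacle will be the final verification: converting the Schur-functor rank formulas into tractable polynomial or binomial expressions in $n$ and $t$ and proving that the Ulrich equality cannot hold outside the stated cases. I would begin by hand-computing small cases such as $(n,t) = (2,2), (4,2), (5,3), (4,4)$ to understand the shape of the obstruction, then formulate a uniform numerical inequality ruling out almost Gorenstein-ness for $n-t$ even with $(n,t) \neq (3,1)$, possibly by induction on $n$ or $t$. A secondary complication is the bookkeeping of graded shifts and $a$-invariants: I would work consistently in the graded category, invoking the graded-local equivalence (1)$\Leftrightarrow$(2) only to translate the final result to the local statement.
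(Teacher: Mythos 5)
The paper's own proof of this corollary is a one-liner: by the Consini--Procesi presentation quoted just above it, $A^G$ is $\sk[Y]/I_{t+1}(Y)$, i.e.\ it \emph{is} the symmetric determinantal ring $R$, and the statement is then exactly Theorem \ref{mainthm}. What you have written is therefore really a proof plan for Theorem \ref{mainthm} itself, and it has two genuine gaps. First, the ``general fact'' you invoke for $(1)\Leftrightarrow(2)$ does not exist: for a graded Cohen--Macaulay ring only the implication (almost Gorenstein graded) $\Rightarrow$ (almost Gorenstein local at the irrelevant ideal) holds in general, and the paper explicitly points to counterexamples to the converse (\cite[Theorems 2.7, 2.8]{GMTY2}, \cite[Example 8.8]{GTT}). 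The equivalence of the graded and local conditions for these particular rings is part of what must be proved, not a tool you may cite.

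Second, and more seriously, the almost Gorenstein property is not a Hilbert-series invariant, so the ``sharp numerical equality involving the $h$-vector, the type, the multiplicity'' on which your dimension-one reduction hinges is not available as an \emph{equivalence}: two graded rings with identical $h$-vector and type can differ in almost Gorensteinness. What the paper actually extracts from the Ulrich condition is an inequality, $\mu_R(\mathfrak{m}K_R)\le \frac{n(n+1)}{2}+(d-1)(r(R)-1)$, and it contradicts this with Taniguchi's lower bound $\mu_R(\mathfrak{m}K_R)\ge \mu_R(\mathfrak{m})\cdot\rank F_{\ell}-\rank F_{\ell-1}$, which is \emph{not} a consequence of the Hilbert series: it uses the structural fact that every entry of the last differential of the minimal free resolution has degree one (plus a separate argument to transfer the char-$0$ Schur-functor computation of $\rank F_{\ell}$ and $\rank F_{\ell-1}$ to arbitrary characteristic via levelness). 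Your plan uses the same two ranks but supplies no mechanism for turning them into an obstruction without this linearity input. If you do want a Hilbert-series-flavored route for the graded statement, the correct vehicle is the one the paper uses for the case $(n,t)=(3,1)$: since $R$ is a level domain, Theorem \ref{thm2} reduces the non-Gorenstein graded almost Gorenstein condition to the single equation $a(R)=1-\dim R$; computing $a(R)=-(n+1)t/2$ from the top of the resolution turns this into $t(n-t)=2$, which for $n-t$ even forces $(n,t)=(3,1)$. That replaces your proposed reduction to dimension one entirely, but it proves only the graded statement, not the local one.
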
 

This paper is organized as follows. In Section $2$ we give some basic properties on almost Gorenstein rings. We prove Theorem \ref{mainthm} in Section 3. We also explain a rank computation of the last two modules of the resolution of determinantal rings using the techniques in the representation theory of finite groups.

  
\section{Preliminaries}\label{Prelim}
In this section we list the definition and the basic properties of almost Gorenstein rings, which we will use throughout this paper. Let $(R, \m)$ be a Cohen-Macaulay local ring with $d=\dim R$, possessing a canonical module $K_R$.

\begin{definition}({\cite[Definition 1.1]{GTT}})\label{3.1}
We say that $R$ is {\it an almost Gorenstein local ring}, if there exists an exact sequence
$$
0 \to R \to K_R \to C \to 0
$$
of $R$-modules such that $\mu_R(C) = e^0_{\m}(C)$, where $\mu_R(C)$ denotes the number of elements in a minimal system of generators for $C$ and
$$
e^0_{\m}(C) = \lim_{n \to \infty}\frac{\ell_R(C/\m^{n+1}C)}{n^{d-1}}\cdot (d-1)!
$$
is the multiplicity of $C$ with respect to $\m$.
\end{definition}

Every Gorenstein ring is an almost Gorenstein ring, and the converse holds if the ring $R$ is Artinian (\cite[Lemma 3.1 (3)]{GTT}). Definition \ref{3.1} insists that an almost Gorenstein ring $R$ might not be Gorenstein, but the ring $R$ can be embedded into its canonical module $K_R$ and the difference $K_R/R$  has good properties.
For an arbitrary exact sequence
$$
0 \to R \to K_R \to C \to 0
$$
of $R$-modules with $C \ne (0)$, the $R$-module $C$ is Cohen-Macaulay  and of dimension $d-1$ (\cite[Lemma 3.1 (2)]{GTT}).
Suppose that $R$ has an infinite residue class field $R/ \m$. Consider the local ring $R_1=R/[(0):_RC]$ with maximal ideal $\m_1$. 
We can choose elements $f_1, f_2, \ldots, f_{d-1} \in \m$ satisfying the ideal $(f_1, f_2, \ldots, f_{d-1})R_1$ forms a minimal reduction of $\m_1$. Then
$$
e_{\m}^0(C) = e_{\m_1}^0(C) = \ell_R(C/(f_1, f_2, \ldots, f_{d-1})C) \ge \ell_R(C/\m C) = \mu_R(C). 
$$
Therefore, $e_{\m}^0(C) \ge \mu_R(C)$ and we say that $C$ is {\it an Ulrich $R$-module} if $e_{\m}^0(C) = \mu_R(C)$. Thus, $C$ is an Ulrich $R$-module if and only if $\m C = (f_1, f_2, \ldots, f_{d-1})C$. 




In the rest of this section, let $R=\bigoplus_{n\ge 0}R_n$ be a Cohen-Macaulay graded ring. Assume $\sk=R_0$ is a local ring and there exists the graded canonical module $K_R$. Set $a=a(R)$ the $a$-invariant of $R$, i.e., 
$
a=\max\{n\in \mathbb{Z} \mid [H^d_{\mathfrak{M}}(R)]_n\neq 0\}
$
where $\{[H^d_{\mathfrak{M}}(R)]_n\}_{n\in \mathbb{Z}}$ denotes the homogeneous components of $d$-th graded  local cohomology module of $H^d_{\mathfrak{M}}(R)$ of $R$ with respect to the unique graded maximal ideal $\mathfrak{M}$. 

\begin{definition}(\cite[Definition 1.5]{GTT})
We say that $R$ is {\it an almost Gorenstein graded ring}, if there exists an exact sequence
$$
0 \to R \to K_R(-a) \to C \to 0
$$
of graded $R$-modules such that $\mu_R(C) = e_{\mathfrak{M}}^0(C)$.
\end{definition}

Note that $K_R(-a)$ stands for the graded $R$-module whose underlying $R$-module is the same as that of $K_R$ and whose grading is given by $[K_R(-a)]_n=[K_R]_{n-a}$ for all $n\in \mathbb{Z}$. 
Similarly for local case, every Gorenstein graded ring is an almost Gorenstein graded ring. 
As $C_\mathfrak{M}$ is Ulrich as an $R_\mathfrak{M}$-module  and the canonical module $K_R$ is compatible with localization, the local ring $R_{\mathfrak{M}}$ is almost Gorenstein, provided $R$ is almost Gorenstein as a  graded ring. Although the converse doesn't hold in general (see e.g., \cite[Theorems 2.7, 2.8]{GMTY2}, \cite[Example 8.8]{GTT}), the theory is still attractive and worth studying the theory as, for example, the following theorem shows.


\begin{theorem}{$($\cite[Theorem 1.6]{GTT}$)$}\label{thm2}
Let $R=\sk[R_1]$ be a Cohen-Macaulay homogeneous ring over an infinite residue field $\sk$. Suppose that $R$ is not a Gorenstein ring. Then the following conditions are equivalent.
\begin{enumerate}
\item[$(1)$] $R$ is almost Gorenstein and level. 
\item[$(2)$] The field of fractions $Q(R)$ of $R$ is Gorenstein and $a(R)=1-\dim R$.
\end{enumerate}
\end{theorem}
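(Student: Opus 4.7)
The plan is to establish the cycle $(3) \Rightarrow (1) \Rightarrow (2) \Rightarrow (3)$. The implication $(1) \Rightarrow (2)$ is the standard observation from Section~\ref{Prelim}: localizing a graded almost Gorenstein sequence $0\to R \to K_R(-a)\to C\to 0$ at the graded maximal ideal $\m$ preserves Ulrichness. For $(3) \Rightarrow (1)$ we split into two cases. When $n-t$ is odd, Goto's theorem gives that $R$ is Gorenstein, hence automatically almost Gorenstein graded. When $(n,t)=(3,1)$, the $2\times 2$ minors of a generic $3\times 3$ symmetric matrix are precisely the relations among monomials of degree $2$ in three variables, so up to rescaling the grading $R$ is isomorphic to the second Veronese subring $\sk[x,y,z]^{(2)}$ of a polynomial ring in three variables. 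This is a classical example of an almost Gorenstein graded ring; the required embedding $R\hookrightarrow K_R(-a)$ is written down explicitly and the Ulrichness of the cokernel is verified by a direct $\sk$-dimension count.

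The heart of the proof is $(2)\Rightarrow (3)$, which we argue in contrapositive form: assume $n-t$ is even (so $R$ is \emph{not} Gorenstein by Goto's theorem) and $(n,t)\neq(3,1)$, and show $R_\m$ cannot be almost Gorenstein local. The main tools are the description of the minimal $S$-free resolution of $R$ in terms of Schur functors (the J\'ozefiak--Pragacz--Weyman resolution for ideals of minors of a symmetric matrix) and Theorem~\ref{thm2}. From the top of the resolution one reads off an explicit formula for the $a$-invariant $a(R)$, and from the last two free modules one extracts both the Cohen--Macaulay type of $R$ and the multiset of generating degrees of the canonical module $K_R$. Since $R$ is a graded domain, $Q(R)$ is a field and hence Gorenstein, so Theorem~\ref{thm2} identifies the ``almost Gorenstein and level'' property with the arithmetic condition $a(R)=1-\dim R$; substituting the formula for $a(R)$ shows that, under the hypothesis $n-t$ even, this equality forces $(n,t)=(3,1)$.

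It remains to exclude a non-level almost Gorenstein structure and to transfer the graded obstruction to the local setting. For the first point, any graded exact sequence $0\to R\to K_R(-a)\to C\to 0$ with $C$ Ulrich concentrates the minimal generators of $K_R(-a)$ in at most two degrees: the image of $1\in R$ accounts for the lowest-degree generator, and Ulrichness of $C$ forces the remaining generators of $K_R(-a)$ to lie in a single common higher degree determined by $\m C$. Reading off the generator-degree multiset of $K_R$ from the Schur-functor description of the penultimate module of the resolution, one checks that for $n-t$ even with $(n,t)\neq(3,1)$ this multiset spreads across more degrees than the above constraint permits, giving the desired contradiction at the graded level. The passage to the local setting uses that the canonical module, the type, and the multiplicity $e^0_\m(C)$ are all compatible with localization at $\m$, so that a hypothetical almost Gorenstein structure on $R_\m$ would already be detected by the graded invariants ruled out above.

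The main obstacle is the Schur-functor calculation underpinning the last two paragraphs: computing the last two terms of the minimal $S$-free resolution of $R$ as explicit sums of Schur modules, and extracting the ranks and the generating degrees needed for the comparison. This is precisely the ``rank computation of the last two modules in the resolution using Schur functors'' advertised in the abstract, and it is where the representation-theoretic heart of the paper enters in an essential way.
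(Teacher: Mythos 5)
Your proposal does not prove the statement in question. The statement is Theorem~\ref{thm2}, a general criterion for an arbitrary Cohen--Macaulay homogeneous ring $R=\sk[R_1]$: the equivalence of ``$R$ is almost Gorenstein and level'' with ``$Q(R)$ is Gorenstein and $a(R)=1-\dim R$.'' This result is quoted from \cite[Theorem 1.6]{GTT} and the paper offers no proof of it at all; it is used as a black box. What you have written instead is a proof sketch of Theorem~\ref{mainthm}, the paper's main result about the specific determinantal ring $S/I_{t+1}(X)$ of a symmetric matrix --- your three conditions, the case split on the parity of $n-t$, and the exceptional case $(n,t)=(3,1)$ all belong to that theorem, not to Theorem~\ref{thm2}. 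Worse, your argument explicitly invokes Theorem~\ref{thm2} (``Theorem~\ref{thm2} identifies the almost Gorenstein and level property with the arithmetic condition $a(R)=1-\dim R$''), so read as a proof of Theorem~\ref{thm2} it is circular, and read as a proof of anything it leaves the actual content of Theorem~\ref{thm2} --- why levelness plus the almost Gorenstein property is equivalent to the $a$-invariant condition together with Gorensteinness of the total quotient ring --- completely unaddressed.

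If the intent was to supply a proof of Theorem~\ref{thm2} itself, you would need an argument along the lines of \cite{GTT}: for $(1)\Rightarrow(2)$, use the exact sequence $0\to R\to K_R(-a)\to C\to 0$ together with levelness to pin down the degrees in which $K_R$ is generated and deduce $a(R)=1-\dim R$, and observe that the embedding $R\hookrightarrow K_R(-a)$ forces $K_{R_{\mathfrak p}}\cong R_{\mathfrak p}$ at minimal primes; for $(2)\Rightarrow(1)$, construct the embedding of $R$ into $K_R(-a)$ from the Gorensteinness of $Q(R)$ and verify that the cokernel is Ulrich using the degree constraint $a(R)=1-\dim R$. None of this appears in your proposal. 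As a secondary point, even viewed as a sketch of Theorem~\ref{mainthm}, your step $(2)\Rightarrow(3)$ misattributes the key tool: the paper does not apply Theorem~\ref{thm2} there (which concerns the graded property, while $(2)$ is a statement about the local ring $R_{\mathfrak m}$, and the paper notes the graded and local properties are not equivalent in general); it instead uses the inequality $\mu_R(\mathfrak m K_R)\ge \mu_R(\mathfrak m)\cdot\rank F_{\ell}-\rank F_{\ell-1}$ from \cite[Proposition 2.5]{T} combined with the Ulrich bound $\mu_A(\mathfrak n C)\le (d-1)(r(A)-1)$.
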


We can directly apply Theorem \ref{thm2} to the determinantal rings of arbitrary matrices.  For instance, we have the following consequences. 

\begin{corollary}
Let $\sk$ be an infinite field and $n \ge 3$ be an odd integer. Let $X=[X_{ij}]$ be a generic $n\times n$ skew-symmetric matrix over $\sk$, i.e.,  $X_{ij}=-X_{ji}$ for all $1 \leq i, j \leq n$. Let $I$ be the ideal generated by submaximal Pfaffians of $X$. Then $R=k[X]/I^2$ is an almost Gorenstein graded ring if and only if $n=3$. 
\end{corollary}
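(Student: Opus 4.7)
The plan is to reduce the corollary to Theorem~\ref{thm2} applied to the standard graded $\sk$-algebra $R = \sk[X]/I^2$, where $I = \mathrm{Pf}_{n-1}(X)$ is the submaximal Pfaffian ideal of the generic $n\times n$ skew-symmetric matrix $X$. A first step is to verify that $R$ is a standard-graded Cohen-Macaulay $\sk[R_1]$-algebra. This rests on the classical Buchsbaum--Eisenbud structure theorem, which gives an explicit length-$3$ resolution of $\sk[X]/I$ and identifies $I$ as a perfect Gorenstein ideal of grade $3$. Using this, together with the known resolutions of the second power of such ideals, one obtains that $R$ is Cohen--Macaulay of dimension $d = \binom{n}{2} - 3$ for $n\ge 3$ odd, with the unique minimal prime $\mathfrak{p} = I/I^2$ and $Q(R) = R_{\mathfrak{p}}$.

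Next, I would determine when $R$ is Gorenstein (the trivial way to be almost Gorenstein); when it is not, Theorem~\ref{thm2} reduces the question to three conditions: $R$ is level, $Q(R)$ is Gorenstein, and $a(R) = 1 - \dim R$. For $n = 3$ the setup collapses to a small graded $\sk$-algebra on $3$ variables, where the three conditions can be checked by direct calculation of the resolution and canonical module. For odd $n\ge 5$ one passes to the localization $A = \sk[X]_I$, a regular local ring, so that $Q(R) \cong A/(I A)^2$, and studies the socle structure and top shifts of the minimal free resolution of $I^2$ over $\sk[X]$, extracting both $a(R)$ and the degrees of the generators of $K_R$.

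The main obstacle is to perform this resolution and $a$-invariant computation uniformly for odd $n \ge 5$. The minimal free resolution of $I^2$ is more delicate than that of $I$ itself, as it involves symmetric and exterior powers of the Buchsbaum--Eisenbud complex; one must carefully track the top Betti numbers and the degrees of the socle of a generic Artinian reduction. Once this is in hand, one shows that for $n \ge 5$ either the equality $a(R) = 1 - \dim R$ fails, or the level condition breaks down, ruling out the almost Gorenstein property and isolating $n = 3$ as the unique case.
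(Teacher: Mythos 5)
Your reduction to Theorem~\ref{thm2} is exactly the route the paper takes, but the proposal stops short of the one step that carries all the content: determining the graded minimal free resolution of $I^2$. You correctly identify this as ``the main obstacle,'' yet you leave it unresolved, offering only that one should ``carefully track the top Betti numbers and the degrees of the socle'' and that afterwards ``either the equality $a(R)=1-\dim R$ fails, or the level condition breaks down.'' That disjunction is a sign the argument is not actually finished: without knowing the twists in the last term of the resolution you can compute neither $a(R)$ nor whether $K_R$ is generated in a single degree, so neither direction of the equivalence is established. The paper closes this gap by importing the resolution from the literature: by Boffi--S\'anchez the ring $R=\sk[X]/I^2$ is Cohen--Macaulay of projective dimension $3$, and by Perlman the resolution of $I^2$ is \emph{linear}, with Betti numbers $\binom{n+1}{2}$, $n^2-1$, $\binom{n}{2}$. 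Writing $n=2k+1$, the last free module is $S^{\oplus\binom{n}{2}}(-2k-2)$, concentrated in a single twist, so $R$ is automatically level (your worry that levelness might fail for $n\ge 5$ does not materialize), and $a(R)=(n+1)-\binom{n}{2}$, which equals $1-\dim R=4-\binom{n}{2}$ precisely when $n=3$. There is also no need for the case split between $n=3$ and $n\ge 5$, nor for the localization at $I$: the single formula for $a(R)$ handles all odd $n$ uniformly.

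A secondary imprecision: the resolution of $I^2$ is not obtained from ``symmetric and exterior powers of the Buchsbaum--Eisenbud complex'' in any routine way -- this is genuinely delicate, which is why the paper cites Perlman's computation rather than deriving it. If you want a self-contained proof you would need to reprove that linearity statement; as written, the proposal assumes the conclusion of the hardest step.
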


\begin{proof}
Note that $R=k[X]/I^2$ is a Cohen-Macaulay ring with $\dim R={n \choose 2} -3$ possessing the projective dimension 3 (\cite[Theorem 2.5]{Boffi-Sanchez}). By \cite{Perlman}, $I^2$ has linear minimal free resolution. We have the Betti numbers as $\beta_1(I^2)={n+1 \choose 2}$, $\beta_2(I^2)=n^2-1$, and $\beta_3(I^2)={n \choose n-2}$. By setting $n=2k+1$ with $k \ge 1$, the sequence
$$
0 \to S^{\oplus {n\choose 2}}(-2k-2) \to S^{\oplus (n^2-1)}(-2k-1) \to S^{\oplus {n+1\choose 2}}(-2k)\to S \to R \to 0
$$
forms a minimal $S$-free resolution of $R$, where $S=k[X]$. 
Hence the $a$-invariant is $2k+2=n+1$, so that the assertion follows from  Theorem~\ref{thm2}. 
\end{proof}

Let $1 \le t \le n$ be integers and $X_1, \ldots, X_{n+t-1}$ be indeterminates over an infinite field $\sk$. {\it A Hankel determinantal ring} is a ring of the form 
$$
R=\sk[X_1, \ldots, X_{n+t-1}]/ I_t(H)
$$
where $I_t(H)$ denotes the ideal of the polynomial ring $\sk[X_1, \ldots, X_{n+t-1}]$ generated by $t \times t$ minors of {\it the Hankel matrix $H$}  
$$
H=\begin{bmatrix*}
X_1 & X_2 & X_3 & \cdots & X_s \\
X_2 & X_3 & X_4 & \cdots & X_{s+1}\\
\vdots & \vdots & \vdots & \ddots & \vdots \\
X_r & X_{r+1} & \cdots & \cdots & X_{s+r-1}
 \end{bmatrix*}.
 $$
We now consider the generic determinantal ring $A=\sk[Y]/I_t(Y)$, where $Y$ is a $t\times n$ matrix of indeterminates over $\sk$, and $I_t(Y)$ denotes the ideal of $\sk[Y]$ generated by $t \times t$-minors of $Y$. Since the coset of elements $Y_{i, j+1}-Y_{i+1, j}$ are part of a system of parameters for $A$, we have a ring isomorphism between $A$ modulo these elements and $R=\sk[X_1, \ldots, X_{n+t-1}]/ I_t(H)$. Therefore, $R$ is a Cohen-Macaulay ring with $\dim R = 2t-2$, and hence $R$ is Gorenstein if and only if $n=t$. 

\begin{corollary}
$R$ is an almost Gorenstein graded ring if and only if $n=t$ or $n\neq t$, $t=2$.
\end{corollary}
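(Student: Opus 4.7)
The plan is to apply Theorem~\ref{thm2}, treating the Gorenstein case separately. When $n=t$ the ring $R$ is Gorenstein, hence almost Gorenstein, so the substance of the corollary lies in the case $t<n$, where I must show that $R$ is almost Gorenstein precisely when $t=2$.

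Assume $t<n$. My approach is to extract everything from a single minimal free resolution of $R$ over $\sk[X]$. The excerpt records that $R=A/\underline{f}A$ for $A=\sk[Y]/I_t(Y)$ and a linear regular sequence $\underline{f}=\{Y_{i,j+1}-Y_{i+1,j}\}$. Since $\underline{f}$ is regular both on $\sk[Y]$ and on $A$, a Koszul-style $\Tor$ computation shows that specialization transports the Eagon--Northcott resolution of $A$ to a $\sk[X]$-free resolution of $R$; by functoriality this specialization is nothing but the Eagon--Northcott complex built from the Hankel matrix $H$ itself. Minimality is automatic because every entry of every differential lies in the irrelevant ideal of $\sk[X]$. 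The last free module of this resolution is $\sk[X](-n)^{\binom{n-1}{t-1}}$, a single shifted free module.

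From this resolution I would read off everything I need. Since the last free module is concentrated in a single shift, $\omega_R$ is generated in a single degree and hence $R$ is level. Dualizing the last term and twisting by the canonical shift $-(n+t-1)$ of $\sk[X]$ places the generators of $\omega_R$ in degree $t-1$, which gives $a(R)=1-t$. Moreover, Hankel determinantal rings are well known to be integral domains (for example, as homogeneous coordinate rings of secant varieties of rational normal curves), so $Q(R)$ is a field and hence Gorenstein. Combined with $\dim R=2(t-1)$, the equality $a(R)=1-\dim R$ becomes $1-t=3-2t$, equivalent to $t=2$. Invoking Theorem~\ref{thm2}, the levelness just established ensures the equivalence $(1)\Leftrightarrow(2)$ there yields exactly the almost Gorenstein criterion, completing the case $t<n$.

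The main technical point I foresee is justifying the transported resolution: one must carefully check that specialization of the Eagon--Northcott complex of $Y$ really is acyclic over $\sk[X]$ (using the two regularity conditions on $\underline{f}$) and minimal after specialization, and then track the grading shift $-n$ of the final term through local duality. Once this bookkeeping is done, the rest of the proof is a short numerical verification together with a direct citation of Theorem~\ref{thm2}.
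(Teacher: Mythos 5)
Your proof is correct and reaches the same endpoint as the paper --- the identity $a(R)=1-t$ fed into Theorem~\ref{thm2} together with $\dim R=2t-2$ --- but you compute the key invariant by a genuinely different route. The paper exhibits the explicit homogeneous system of parameters $X_1,\dots,X_{t-1},X_{n+1},\dots,X_{n+t-1}$ and observes that the socle of the Artinian reduction is spanned by the degree $t-1$ monomials in $X_t,\dots,X_n$, which gives $a(R)=(t-1)-(2t-2)=1-t$ and (implicitly) levelness in one stroke. You instead transport the Eagon--Northcott resolution of the generic determinantal ring $\sk[Y]/I_t(Y)$ along the linear regular sequence $Y_{i,j+1}-Y_{i+1,j}$ to obtain a minimal $\sk[X]$-free resolution of $R$ whose last term is $\sk[X](-n)^{\binom{n-1}{t-1}}$, and read off both levelness and $a(R)=n-(n+t-1)=1-t$ from that single twist. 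Your version is longer but has the merit of making explicit two hypotheses of Theorem~\ref{thm2} that the paper's two-line proof leaves tacit: that $R$ is level (needed so that ``almost Gorenstein'' is equivalent to condition (1) of the theorem) and that $Q(R)$ is Gorenstein (you note $R$ is a domain); the paper's version is shorter and avoids the acyclicity/minimality bookkeeping for the specialized complex, which you correctly flag as the main thing to check. One shared caveat: both arguments apply Theorem~\ref{thm2}, which assumes $R$ is not Gorenstein, so both tacitly require $t\ge 2$ in the case $t<n$ (for $t=1$ the ring $I_1(H)$ collapses $R$ to $\sk$); this is inherited from the paper's framing rather than a defect of your argument.
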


\begin{proof}
The coset of elements $X_1, \ldots, X_{t-1}, X_{n+1}, \ldots, X_{n+t-1}$ are a homogeneous system of parameters for $R$, and the socle modulo this system of parameters is spanned by the degree $t-1$ monomials in $X_t, \ldots, X_n$. Hence $a(R) =1-t$. By Theorem~\ref{thm2}, we get the required assertion.  
\end{proof}

\if0
It is known by \cite{Goto79}, if $n-t$ is even, then $R$ is not Gorenstein, and $r(R)=\binom{n}{t}$. Let
\[
0\xrightarrow{}F_{\ell}\xrightarrow{\varphi_{\ell}} F_{{\ell}-1}\xrightarrow{}\cdots\xrightarrow{\varphi_1}F_0\rightarrow 0
\]
be a graded minimal $S$-resolution of $R$ where $F_{\ell}=S^{\oplus p} $ and $F_{{\ell}-1}=S^{\oplus q}$. By \cite[Proposition 2.5]{T}, we have 
$$
\mu_S(\mathfrak{M}K_R)\geq \frac{n(n+1)}{2}p-q.
$$
Kutz \cite[Theorem 1]{Kutz} and the Auslander-Buchsbaum formula, we have $$\ell=\frac{n(n+1)}{2}-nt+\frac{t(t-1)}{2},$$ where $\ell$ is the projective dimension of $R$ as an $S$-module.
\fi

\section{Proof of Theorem \ref{mainthm}} 

This section aims at proving Theorem \ref{mainthm}. First of all, we fix our notation and assumptions on which all the results in this section are based.

\begin{setup}\label{Setup1}
Let $\sk$ be an infinite field, and $0<t<n$ be integers. Let $X=[X_{ij}]$ be a symmetric matrix of indeterminates over $\sk$, and $S= \sk[X](=\sk[X_{ij}]_{1\leq i\leq j\leq n})$ be the polynomial ring over $\sk$ in $n(n+1)/2$ variables. We set $R=S/I_{t+1}(X)$, where $I_{t+1}(X)$ denotes the ideal of $S$ generated by $(t+1)\times(t+1)$ minors of $X$. Let $\mathfrak{M}=(x_{ij} \mid 1\leq i\leq j \leq n)$ where $x_{ij}$ stands for the image of $X_{ij}$ in $R$. Set $d=\dim R=nt-\frac{1}{2}t(t-1)$. 
\end{setup}

Until Proposition \ref{3.3}, we assume ${\rm char}(\sk)=0$ and $R=S/{I}_{t+1}(X)$ is not a Gorenstein ring. Let 
$$
0 \to F_{\ell} \to F_{{\ell}-1} \to \cdots \to F_0 \to R \to 0
$$
be a minimal $S$-free resolution of $R$. 
The key of the proof of Theorem \ref{mainthm} is the equalities
$$
\rank F_{{\ell}-1}=n\dbinom{n}{t+1}-\dbinom{n}{t+2} \ \ \ \text{and} \ \ \  \rank F_{\ell}=\dbinom{n}{t}.
$$
To show this, we need some techniques from representation theory of finite groups. More precisely, let $X^s$ be the space of $n\times n$ symmetric matrices over $\sk$. The coordinate ring $A=\sk[X^s]$ of $X^s$ is isomorphic to the polynomial ring $\sk[\phi_{i,j}]_{1\leq i\leq j\leq n}$, where $\phi_{i,j}$ denotes the $(i, j)$-th coordinate function on $X^s$ with $\phi_{i,j} = \phi_{j, i}$. 

We identify $X^s$ with the space $S_2E^*$, {\it the $2$nd symmetric power of $E^*$}, where $E$ is the vector space over  $\sk$  with $\dim_{\sk}E=n$ and $(-)^* = \Hom_\sk(-, \sk)$ is the dual functor. Let us also identify the coordinate ring $A=\sk[X^s]$ of $X^s$ with the symmetric algebra $\text{Sym}(S_2E^*)$. By setting $\{e_i\}_{1 \le i \le n}$ the $\sk$-basis of $E$, the coordinate function $\phi_{i,j}$ can be identified with the element $e_ie_j$ in $\text{Sym}(S_2E^*)$. 

For each integer $t$ with $0 < t < n$, let us consider the subvariety
$$
Y^s_t=\{\phi \in X^s\mid \text{rank}\; \phi \leq t \}
$$
of $X^s$, called {\it the symmetric determinantal variety}. 
Note that the variety $Y^s_t$ can be identified with the set of symmetric matrices $\Phi$ whose minors of size $t+1$ vanish. 
Hence, in order to compute the ranks of the tail and its before one of the resolution of determinantal rings $R=S/I_{t+1}(X)$, it comes down to think about the resolution of $A/J_{t+1}$, where $J_{t+1}$ denotes the ideal of $A$ generated by $(t+1) \times (t+1)$ minors of $\Phi$. 


Let $\lambda=(\lambda_1, \lambda_2, \ldots, \lambda_m)$ be a partition of a positive integer $n$, i.e., a weakly decreasing sequence $\lambda_1 \ge \lambda_2 \ge \cdots \ge \lambda_m$ of non-negative integers such that $|\lambda|=\sum_{i=1}^m \lambda_i = n$, where $|\lambda|$ is called {\it the weight of $\lambda$}. 
We identify the partitions $(\lambda_1, \lambda_2, \ldots, \lambda_m)$ and $(\lambda_1, \lambda_2, \ldots, \lambda_m, 0)$. 
For each partition, we associate its  Young diagram, so that we now identify the partition with its Young diagram. 
The length of the diagonal of $\lambda$ is called  {\it the rank of $\lambda$}, denoted by $\rank \lambda$. In other words, $\rank \lambda = r$ is the biggest $r\times r$ square fitting inside $\lambda$. 
For each box $\mathbb{X}$ in a Young diagram $\lambda$, the set of boxes to the right (resp. below) of $\mathbb{X}$ (including $\mathbb{X}$) is called {\it an arm of $\mathbb{X}$} (resp. {\it a leg of $\mathbb{X}$}). 
{\it A hook of $\mathbb{X}$} consists of the arm and leg of $\mathbb{X}$, and the number of boxes in the hook of $\mathbb{X}$ is called {\it the hook length of $\mathbb{X}$.} 

Let $\lambda$ be a partition of rank $r$. For each $1 \le i \le r$, we denote by $a_i$ (resp. $b_i$) {\it the arm length} (resp. {\it the leg length}), i.e., the number of boxes in the arm (resp. the leg), of the $i$-th box on the diagonal of $\lambda$. Since the partition is uniquely determined by its rank $r$ and the numbers $a_i$, $b_i$ for all $1 \le i \le r$, we can write
$$
\lambda = (a_1, a_2, \ldots, a_r \mid b_1, b_2, \ldots, b_r)
$$
which is called {\it the hook notation} for $\lambda$. Notice that, in the hook notation, we have $a_1>a_2> \cdots> a_r>0$, $b_1>b_2>\cdots>b_r>0$, and $|\lambda| = \sum_{i=1}^r(a_i + b_i)-r$. 
Let $\lambda =(\lambda_1, \lambda_2, \ldots, \lambda_m)$ be a partition. 
{\it The conjugate partition $\lambda'$} of $\lambda$ is defined by 
$$
\lambda'_i = \#\{t \mid 1 \le t \le m, \ \lambda_t \ge i\}.
$$
This means the Young diagram of $\lambda'$ is obtained from the Young diagram of $\lambda$ by reflecting at the line $y=-x$ in the coordinate plane. The reader may consult with \cite[Chapter 1]{W} for more details regarding partitions.

We denote by $Q_{t-1}(2m)$ the set of partitions $\lambda$ of rank $r$ that can be written as
$$
\lambda=(a_1, a_2, \ldots, a_{r} \mid b_1, b_2, \ldots, b_{r})
$$
in the hook notation, where $a_i=b_i+(t-1)$ for each $1 \le i \le r$ and $|\lambda|=2m$.  
This partition with even rank can be put in the form 
$$
\lambda=\lambda(\alpha, u)=(\alpha_1+2u+t-1, \ldots, \alpha_{2u}+2u+t-1, \alpha'_1, \ldots, \alpha'_v)
$$ 
where $u=\frac{1}{2} \rank \lambda$ and $\alpha$ is a partition with $\alpha_1'\leq  \rank \lambda$. Therefore, for example, the Young diagram of the partition $\lambda$ in $Q_{t-1}(2m)$ with even rank has the following form
$$
\ytableausetup{mathmode, boxsize=1.2em}
\begin{ytableau}
 {}  &  &  & & \mathbb{X}& \mathbb{X}&\bullet & \bullet&\bullet \\
   &  &    &  & \mathbb{X}& \mathbb{X}& \bullet& \bullet&\bullet \\
   &  & & & \mathbb{X}& \mathbb{X}& \bullet&\bullet \\
   &  & & & \mathbb{X} & \mathbb{X} & \bullet\\
 \circ   &  \circ& \circ& \circ  \\
 \circ   & \circ &  \circ \\
  \circ  & \circ    \\
\end{ytableau}
$$
where the boxes corresponding to $\alpha$ are filled by $\bullet$, the boxes corresponding the conjugate partition $\alpha'$ are filled by $\circ$, and the boxes providing additional $t-1$ elements for diagonal hook lengths are filled by $\mathbb{X}$.

For a partition $\lambda =(\lambda_1, \lambda_2, \ldots, \lambda_m)$, we consider {\it the Schur module} $L_{\lambda} E$, i.e., 
$$
L_{\lambda}E = \left(\bigwedge^{\lambda_1} E \otimes_{\sk} \bigwedge^{\lambda_2} E \otimes_{\sk} \cdots \otimes_{\sk}  \bigwedge^{\lambda_m} E\right)/R(\lambda, E)
$$
where the submodule $R(\lambda, E)$ is the sum of submodules of the form:
$$
\bigwedge^{\lambda_1} E \otimes_{\sk} \cdots \otimes_{\sk}  \bigwedge^{\lambda_{a-1}} E \otimes_{\sk} R_{a, a+1}(E) \otimes_{\sk}\bigwedge^{\lambda_{a+2}} \cdots \otimes_{\sk}  \bigwedge^{\lambda_{m}} E
$$
for $1 \le a \le m-1$. Here, $R_{a, a+1}(E)$ denotes the submodules of $\bigwedge^{\lambda_a} E \otimes_{\sk} \bigwedge^{\lambda_{a+1}} E $ spanned by the  images of the composite maps with $u+v < \lambda_{a+1}$ below:
$$
\xymatrix{
\bigwedge^{u} E \otimes_{\sk} \bigwedge^{\lambda_a-u + \lambda_{a+1} -v} E \ar[d]^{{\rm id} \otimes \Delta \otimes {\rm id}}  \otimes_{\sk} \bigwedge^{v} E \\ 
 \bigwedge^{u} E \otimes_{\sk} \bigwedge^{\lambda_a-u}E \otimes_{\sk} \bigwedge^{\lambda_{a+1} -v} E \ar[d]^{m_{12} \otimes m_{34}} \otimes_{\sk} \bigwedge^{v} E \\
\hspace{1.2em} \bigwedge^{\lambda_a} E \otimes_{\sk} \bigwedge^{\lambda_{a+1}} E
}
$$
where $\Delta$ stands for the diagonal map, and $m_{12}$ and $m_{34}$ are the multiplication maps.

\medskip 

With this notation above, we recall the following theorem that shows how to find the components of minimal free resolutions of $A/J_{t+1}$ in terms of $L_{\lambda} E$ corresponding to $\lambda$. 

\begin{theorem}$(${\cite[Theorem 6.3.1 (c)]{W}}$)$\label{rankthm} The i-th term $F_i$ of the minimal free resolution of $A/J_{t+1}$ as an $A$-module is given by the formula 
$$F_i=\bigoplus_{\lambda\in Q_{t-1}(2m),\; \rank \lambda \; \text{even},\; i=m-t\cdot\frac{1}{2}\cdot\rank \lambda} L_{\lambda}E \otimes_{\sk} A.
$$
\end{theorem}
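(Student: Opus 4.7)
The plan is to invoke the geometric technique of Kempf--Lascoux--Weyman applied to a natural desingularization of the symmetric determinantal variety. On the Grassmannian $G = \mathrm{Grass}(n-t, E)$, let $\mathcal R$ denote the tautological rank-$(n-t)$ subbundle and $\mathcal Q = E/\mathcal R$ the rank-$t$ quotient. A symmetric form $\phi \in S_2 E^*$ has rank at most $t$ exactly when it vanishes on some $(n-t)$-dimensional subspace of $E$, so the total space $Z$ of the subbundle $\xi = S_2 \mathcal Q^* \subset V := S_2 E^* \otimes \mathcal O_G$ projects birationally onto $Y^s_t$. In characteristic zero the short exact sequence $0 \to \mathcal Q^* \to E^* \to \mathcal R^* \to 0$ induces a splitting of the obstruction bundle,
$$
\eta^* = (V/\xi)^* \cong (\mathcal R \otimes \mathcal Q) \oplus S_2 \mathcal R.
$$

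The main theorem of the geometric technique then expresses the minimal graded free resolution of $A/J_{t+1}$ as
$$
F_i = \bigoplus_{j \geq 0} H^j\!\left(G,\ \bigwedge^{i+j} \eta^*\right) \otimes_{\sk} A(-(i+j)).
$$
I would expand $\bigwedge^k \eta^* = \bigoplus_{p+q=k} \bigwedge^p(\mathcal R \otimes \mathcal Q) \otimes \bigwedge^q(S_2 \mathcal R)$ using Cauchy's formula
$$
\bigwedge^p(\mathcal R \otimes \mathcal Q) = \bigoplus_{|\mu| = p} L_\mu \mathcal R \otimes L_{\mu'} \mathcal Q
$$
and the classical plethysm $\bigwedge^q(S_2 \mathcal R) = \bigoplus_\nu L_\nu \mathcal R$, where $\nu$ ranges over partitions of $2q$ whose hook notation satisfies $a_i - b_i = 1$ at every diagonal box. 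Littlewood--Richardson then converts each product $L_\mu \mathcal R \otimes L_\nu \mathcal R$ into $\bigoplus_\gamma c^\gamma_{\mu\nu} L_\gamma \mathcal R$, reducing $\bigwedge^k \eta^*$ to a sum of terms of the form $L_{\mu'} \mathcal Q \otimes L_\gamma \mathcal R$.

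The final step is Borel--Weil--Bott on $G$: for each admissible pair $(\mu', \gamma)$, the cohomology $H^j(G, L_{\mu'} \mathcal Q \otimes L_\gamma \mathcal R)$ is nonzero in at most one cohomological degree and, when nonzero, equals $L_\lambda E$ for a partition $\lambda$ obtained from the concatenation $(\mu', \gamma)$ by Weyl-group straightening. The combinatorial core of the argument --- and its main obstacle --- is to prove that the surviving $\lambda$'s are exactly those in $Q_{t-1}(2m)$ of even rank, each arising from a unique triple $(\mu, \nu, \gamma)$ with Littlewood--Richardson coefficient $1$, and that the cohomological index assembles to $i = m - \tfrac{t}{2}\rank \lambda$. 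The diagonal shift $a_i - b_i = t-1$ arises because the $t$ rows contributed by $\mu'$ stretch each diagonal arm of the $S_2$-plethysm partition $\nu$ by a further $t-1$ after straightening, while the even-rank constraint is imposed by the $S_2$-plethysm itself and matches the parameterization $\lambda = \lambda(\alpha, u)$ displayed in the excerpt. Verifying this dictionary requires a delicate weight-by-weight analysis, equivalent to the degeneration of the Weyman spectral sequence at $E_2$ with no cancellation between different $(i, j)$ contributions.
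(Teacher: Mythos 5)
The paper offers no proof of this statement: it is quoted verbatim from \cite[Theorem 6.3.1 (c)]{W}, so the only ``proof'' present is the citation. Your sketch reconstructs the architecture of the argument in that source --- the Kempf--Lascoux--Weyman geometric technique applied to the desingularization of $Y^s_t$ by the total space of $S_2\mathcal{Q}^*$ over $\mathrm{Grass}(n-t,E)$, followed by the Cauchy formula, the plethysm $\bigwedge^q(S_2\mathcal{R})$, Littlewood--Richardson, and Bott's theorem --- so the strategy is the correct one and coincides with the cited reference.

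As a standalone proof, however, it has genuine gaps. The step you yourself flag as the ``main obstacle'' is in fact the entire content of the theorem: running Bott's algorithm on the weights coming from $L_{\mu'}\mathcal{Q}\otimes L_\gamma\mathcal{R}$ and verifying that the surviving summands are exactly the $L_\lambda E$ with $\lambda\in Q_{t-1}(2m)$ of even rank, each with multiplicity one and landing in homological degree $i=m-\tfrac{t}{2}\rank\lambda$, is where all the work lies, and deferring it to ``a delicate weight-by-weight analysis'' leaves the statement unproved. A smaller inaccuracy: the tautological sequence on the Grassmannian does not split, so $\eta^*$ carries only a filtration whose associated graded is $(\mathcal{R}\otimes\mathcal{Q})\oplus S_2\mathcal{R}$; this suffices for computing cohomology of exterior powers via the induced filtration, but the asserted direct-sum decomposition of the bundle is not literally correct. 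Finally, the basic theorem of the geometric technique produces a minimal complex $F_\bullet$ whose homology is $H_{-i}(F_\bullet)\cong R^iq_*\mathcal{O}_Z$; to conclude that $F_\bullet$ is the minimal free resolution of $A/J_{t+1}$ one also needs $R^{>0}q_*\mathcal{O}_Z=0$ and $q_*\mathcal{O}_Z=\mathcal{O}_{Y^s_t}$ (normality and rational singularities, parts (a)--(b) of the cited theorem), together with the fact that the $(t+1)\times(t+1)$ minors generate the prime ideal of $Y^s_t$; none of these inputs is addressed in your sketch.
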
 
Note that the representations occurring in the resolution of $A/J_{t+1}$ are the Schur modules $L_{\lambda(\alpha, u)} E$ for all choices of $\alpha$ and $u$. 
The term  $L_{\lambda(\alpha, u)}E$ occurs in the $i$-th term in the resolution where $i=\frac{1}{2}(|\lambda| -t \cdot\rank \lambda)$.
For all $\lambda = \lambda(\alpha, u) \in Q_{t-1}(2m)$ with even rank $2u$, we have
\begin{center}
$|\lambda| = (2u)^2+2u(t-1) + |\alpha| + |\alpha'| = (2u)^2+2u(t-1) + 2|\alpha|$  
\end{center}
so that $m = \frac{1}{2}|\lambda| = 2u^2+u(t-1) + |\alpha|$.   
Hence the term occurs in $F_i$
with $i=2u^2 - u + |\alpha|$.
\medskip

The following is a key in the proof of Theorem \ref{mainthm}.

\begin{proposition} \label{3.3}
Let $\lambda(\alpha_{\ell}, u_{\ell})$ and $\lambda(\alpha_{\ell-1}, u_{\ell-1})$ be partitions associated with $F_{\ell}$ and $F_{\ell-1}$, respectively. Then the following assertions hold true. 
\begin{enumerate}
\item[$(1)$] $\rank \lambda(\alpha_{\ell}, u_{\ell}) = \rank \lambda(\alpha_{\ell-1}, u_{\ell-1})=n-t$. 
\smallskip
\item[$(2)$] $\rank F_{\ell}=\dbinom{n}{t}$ and $\rank F_{{\ell}-1}=n\dbinom{n}{t+1}-\dbinom{n}{t+2}$.
\end{enumerate}
\end{proposition}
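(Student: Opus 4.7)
The plan is to apply Theorem~\ref{rankthm} combinatorially and then identify the resulting Schur modules with familiar ones.

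Set $w=(n-t)/2$, a positive integer under the non-Gorenstein hypothesis; since $A/J_{t+1}$ is Cohen--Macaulay of codimension $\binom{n-t+1}{2}$, the length of the minimal free resolution is $\ell = w(2w+1) = 2w^2+w$. By the discussion following Theorem~\ref{rankthm}, a pair $(u,\alpha)$ contributes a summand $L_{\lambda(\alpha,u)}E$ to $F_i$ precisely when $i = 2u^2 - u + |\alpha|$; and for $L_{\lambda(\alpha,u)}E$ to be nonzero we need $\lambda_1 = \alpha_1 + 2u + t - 1 \le n$, i.e., $\alpha_1 \le 2w - 2u + 1$. Combined with $\alpha$ having at most $2u$ parts, this forces the upper bound $|\alpha| \le 2u(2w - 2u + 1)$.

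For part~(1), set $s = w - u$. For $F_\ell$, the equation $|\alpha| = 2w^2 + w - 2u^2 + u$ together with the bound above simplifies to $s(2s+1) \le 0$; since $s \ge 0$ (to ensure $|\alpha| \ge 0$), this forces $s = 0$, so $u = w$. Then $|\alpha| = 2w$ with $\alpha_1 \le 1$ forces $\alpha = (1^{2w})$. For $F_{\ell-1}$, the analogous simplification is $(2s-1)(s+1) \le 0$, again yielding $s = 0$ and hence $\alpha = (1^{2w-1})$. In either case the unique contributing partition has $\rank\lambda = 2u = n-t$.

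For part~(2), I compute the two Schur-module dimensions directly. For $F_\ell$ the partition is $\lambda = (n^{n-t}, n-t)$; using $L_\lambda E \cong \mathbb{S}_{\lambda'}E$, the conjugate $\lambda' = ((n-t+1)^{n-t}, (n-t)^t)$ reduces to the column partition $(1^{n-t})$ after subtracting $n-t$ from every part (a determinantal twist), so $\dim L_\lambda E = \dim\bigwedge^{n-t}E = \binom{n}{t}$. For $F_{\ell-1}$ the partition is $\lambda = (n^{n-t-1}, n-1, n-t-1)$; after the analogous twist the conjugate becomes $\mu = (2^{n-t-1}, 1^t)$, and via the Schur-functor duality inside the $n \times 2$ rectangle, $\dim \mathbb{S}_\mu E = \dim \mathbb{S}_{(2,1^t)}E$. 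Pieri's rule gives $E \otimes \bigwedge^{t+1}E \cong \mathbb{S}_{(2,1^t)}E \oplus \bigwedge^{t+2}E$, hence $\dim \mathbb{S}_{(2,1^t)}E = n\binom{n}{t+1} - \binom{n}{t+2}$, as required. The main obstacle is the uniqueness step in~(1): ruling out all pairs with $u < w$ is precisely where the constraint $\lambda_1 \le n$ does its work.
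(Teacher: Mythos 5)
Your proof is correct and follows the same overall strategy as the paper: both use Theorem~\ref{rankthm} together with the projective dimension $\ell=\binom{n-t+1}{2}$ and the indexing $i=2u^2-u+|\alpha|$ to show that $F_{\ell}$ and $F_{\ell-1}$ each arise from a single partition, namely $(n^{n-t},n-t)$ and $(n^{n-t-1},n-1,n-t-1)$, and then compute the two Schur module ranks. The differences are in execution. In part (1) the paper rules out $2u_{\ell}>n-t$ exactly as you do (negativity of $|\alpha_{\ell}|$), but its argument that the rank is not \emph{smaller} than $n-t$ rests on a weight estimate whose displayed step $2u_{\ell}t\ge t(n-t)$ already presupposes $2u_{\ell}\ge n-t$; your use of the constraints $\lambda_1=\alpha_1+2u+t-1\le n$ and $\alpha'_1\le 2u$, giving $|\alpha|\le 2u(2w-2u+1)$ and hence $s(2s+1)\le 0$ (resp.\ $(2s-1)(s+1)\le 0$), closes that gap cleanly and is the more complete argument for the uniqueness of the contributing pair $(u,\alpha)$. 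In part (2) the paper identifies $L_{\lambda}E$ with $L_{(t)}E^*$ and $L_{(t+1,1)}E^*$ via \cite[Chapter 2, Exercise 18]{W} and reads off the second rank from the presentation $\bigwedge^{t+2}E^*\to\bigwedge^{t+1}E^*\otimes E^*\to L_{(t+1,1)}E^*\to 0$, whereas you pass to conjugate partitions, strip off determinantal factors, and apply rectangle duality plus Pieri's rule; these are equivalent standard manipulations and yield the same numbers $\binom{n}{t}$ and $n\binom{n}{t+1}-\binom{n}{t+2}$.
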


\begin{proof} 
$(1)$~We consider a partition  $\lambda(\alpha_{\ell}, u_{\ell})$ associated with $F_{\ell}$ of rank $2u_{\ell}$ for some $0 < u_{\ell}\in \mathbb Z$. Suppose that $2u_{\ell}> n-t$. Since $n-t$ is even, we write $2u_{\ell} =n-t+2k$ with $k >0$. The projective dimension $\ell$ of $A/J_{t+1}$ as an $A$-module is given by 
$$
\ell=\frac{n(n+1)}{2}-nt+\frac{t(t-1)}{2}. 
$$
Hence the weight of $\alpha_{\ell}$ is  
\begin{eqnarray*}
|\alpha_{\ell}| &=& \ell - 2u_{\ell}^2 + u_{\ell} = \left(\frac{n(n+1)}{2}-nt+\frac{t(t-1)}{2}\right)  - \frac{1}{2} \left(4u_{\ell}^2 - 2u_{\ell}\right) \\
&=& (n-t + k)(1-2k) < 0
\end{eqnarray*}
which makes a contradiction. Thus we get $\rank \lambda(\alpha_{\ell}, u_{\ell})= 2u_{\ell}\leq n-t$. We then have
\begin{eqnarray*}
|\lambda(\alpha_{\ell}, u_{\ell})| &=& 2 \ell + 2 u_{\ell}t = (n-t)^2 + (n-t) +2u_{\ell}t \\
&\ge&  (n-t)^2 + (n-t)+  t(n-t)  \\
&\ge&  (n-t)^2
\end{eqnarray*}
where the first equality comes from $\ell = 2u_{\ell}^2 - u_{\ell} +|\alpha_{\ell}| $ and $\frac{1}{2}|\lambda(\alpha_{\ell}, u_{\ell})| = 2u_{\ell}^2+u_{\ell}(t-1) + |\alpha_{\ell}|$. In particular, there is an $(n-t)\times (n-t)$ square sitting inside $\lambda(\alpha_{\ell}, u_{\ell})$. This proves $\rank \lambda(\alpha_{\ell}, u_{\ell})=n-t$. Similarly, one can show that $\rank {\lambda(\alpha_{\ell-1}, u_{\ell-1})}=n-t$, as well.

$(2)$~
Since $\rank (\alpha_{\ell}, u_{\ell}) = n-t$, we have 
$$
|\lambda(\alpha_{\ell}, u_{\ell})| = (n-t)^2 + (n-t) + 2u_{\ell}t =  (n-t)^2 + (n-t)+  t(n-t) =(n-t)(n+1).
$$
This implies $|\alpha_{\ell}| = n-t$, because $|\lambda(\alpha_{\ell}, u_{\ell})| = (n-t)^2 + (n-t)(t-1) + 2 |\alpha_{\ell}|$. Hence the Young diagram of $\lambda(\alpha_{\ell}, u_{\ell})$ has the following form, for example in the case $n=8$ and $t=4$.
$$
\ytableausetup{mathmode, boxsize=1.2em}
\begin{ytableau}
 {}  &  &  & & \mathbb{X}& \mathbb{X}& \mathbb{X}&\bullet  \\
   &   &  & & \mathbb{X}& \mathbb{X}& \mathbb{X}& \bullet  \\
   &  & & & \mathbb{X}& \mathbb{X}& \mathbb{X}& \bullet \\
   &  & & & \mathbb{X}& \mathbb{X} &\mathbb{X}& \bullet\\
 \circ   &  \circ& \circ& \circ  \\
\end{ytableau}
$$
Therefore
$$
\lambda(\alpha_{\ell}, u_{\ell})=(\underbrace{n, n, \ldots, n}_\text{$n-t$ times}, n-t).
$$
Note that we have the isomorphisms
$$
L_{\lambda(\alpha_{\ell}, u_{\ell})} E \cong L_{(t)} E^* \otimes_{\sk}  \left(\bigwedge^n E\right)^{\oplus (n-t+1)} \cong L_{(t)} E^* \cong \bigwedge^t E^*
$$
where the first isomorphism follows from \cite[Chapter 2, Exercise 18]{W} and $E\cong E^{**}$. 
Hence
$$
\rank L_{\lambda(\alpha_{\ell}, u_{\ell})}E = \rank \ \bigwedge^t E^* = {n\choose t}.
$$ 
On the other hand, since $|\lambda(\alpha_{\ell-1}, u_{\ell-1})|=(n+1)(n-t)-2$, we have $|\alpha_{\ell-1}|=n-t-1$ and thus 
$$
\lambda(\alpha_{\ell-1}, u_{\ell-1})=(\underbrace{n, n, \ldots, n}_\text{$n-t-1$ times}, n-1, n-t-1).
$$
Hence the Young diagram of $\lambda(\alpha_{\ell}, u_{\ell})$ has the following form, for example in the case $n=8$ and $t=4$.
$$
\ytableausetup{mathmode, boxsize=1.2em}
\begin{ytableau}
 {}  &  &  & & \mathbb{X}&  \mathbb{X}&  \mathbb{X}&\bullet  \\
   &   &  & &  \mathbb{X}&  \mathbb{X}&  \mathbb{X}& \bullet  \\
   &  & & &  \mathbb{X}&  \mathbb{X}&  \mathbb{X}& \bullet \\
   &  & & &  \mathbb{X} &  \mathbb{X} & \mathbb{X}\\
 \circ   &  \circ& \circ  \\
\end{ytableau}
$$
By \cite[Chapter 2, Exercise 18]{W} and $E\cong E^{**}$, we have the isomorphisms
$$
L_{\lambda(\alpha_{\ell-1}, u_{\ell-1})} E
\cong L_{(t+1, 1)} E^*\otimes_{\sk} \left(\bigwedge^n E\right)^{\oplus (n-t+1)}\cong L_{(t+1, 1)} E^*. 
$$
Since $L_{(t+1, 1)} E^* = \left(\bigwedge^{t+1} E^* \otimes_{\sk} E^* \right)/R((t+1, 1), E^*)$ and the submodule $R((t+1, 1), E^*)$ is generated by the image of the diagonal injective map $\Delta: \bigwedge^{t+2} E^* \to \bigwedge^{t+1} E^* \otimes_{\sk} E^*$, we conclude that
$$
\rank  L_{\lambda(\alpha_{\ell-1}, u_{\ell-1})}E = \rank L_{(t+1, 1)} E^* = n {n \choose t+1}-{n \choose {t+2}}.
$$
Hence, by Theorem~\ref{rankthm}, we obtain 
$$
\rank F_{\ell} = \binom{n}{t} \ \ \text{and} \ \ \rank F_{\ell-1} = n \binom{n}{t+1}- \binom{n}{t+2}
$$
as desired. 
\end{proof}

\if0
\textcolor{red}{
We note that $|\alpha_{\ell}|=n-t$, $|\alpha_{\ell-1}|=n-t-1$, $|\lambda(\alpha_{\ell}, u_{\ell})|=(n+1)(n-t)$, and $|\lambda(\alpha_{\ell-1}, u_{\ell-1})|=(n+1)(n-t)-2$. Thus 
\begin{center}
$\lambda(\alpha_{\ell}, u_{\ell})=(n^{n-t}, n-t)$ and $\lambda(\alpha_{\ell-1}, u_{\ell-1})=(n^{n-t-1}, n-1, n-t-1)$. 
\end{center}
Hence we get 
\begin{center}
$\rank L_{\lambda(\alpha_{\ell}, u_{\ell})}E={n\choose t}$ and $\rank  L_{\lambda(\alpha_{\ell-1}, u_{\ell-1})}E=n {n \choose t+1}- {n \choose {t+2}}$.
\end{center}}
\fi

\if0
\begin{example}
Consider the case where $n=5$ and $t=3$. Let 
$$
0 \to F_3 \to F_2 \to F_1 \to S \to R=S/I_{4}(X) \to 0
$$
be the minimal free resolution of $R$. It is straightforward to show that $F_1, F_2, F_3$ correspond to the following Young diagrams, respectively.
 $$
\ytableausetup{mathmode, boxsize=1.2em}
\begin{ytableau}
 {}  &  & \mathbb{X} &\mathbb{X} & \bullet \\
 {}  &  & \mathbb{X} & \mathbb{X}& \bullet \\
 \circ  & \circ  \\
\end{ytableau}
\hspace{4em}
\ytableausetup{mathmode, boxsize=1.2em}
\begin{ytableau}
 {}  &  & \mathbb{X} & \mathbb{X}&\bullet  \\
 {}  &  & \mathbb{X} & \mathbb{X} \\
 \circ     \\
\end{ytableau}
\hspace{4em}
\ytableausetup{mathmode, boxsize=1.2em}
\begin{ytableau}
 {}  &  &  \mathbb{X}&  \mathbb{X}\\
 {}  &  &  \mathbb{X}&\mathbb{X}  \\
\end{ytableau}
$$
Hence, by Theorem~\ref{rankthm}, we obtain 
\begin{center}
$F_3=L_{(5,5,2)}E\otimes_{\sk} A \cong A^{\oplus10}$, $F_2=L_{(5,4,1)}E\otimes_{\sk} A \cong A^{\oplus 24}$, and $F_1=L_{(4,4)}E\otimes_{\sk} A \cong A^{\oplus15}$. 
\end{center}
Therefore
$$
S^{\oplus10}(-6) \to S^{\oplus24}(-5) \to S^{\oplus15}(-1) \to S \to R \to 0
$$
gives the minimal $S$-free resolution of $R$. 
\end{example}
\fi



We are now ready to prove Theorem \ref{mainthm}. 

\begin{proof}[Proof of Theorem \ref{mainthm}]
$(1)\Rightarrow(2)$~ We choose an exact sequence
$
0 \to R \to K_R(-a)\to C\to 0
$
of graded $R$-modules with $\mu_R(C)={e}_{\mathfrak{m}}^0(C)$, where $a$ denotes the $a$-invariant of $R$. Since $[K_R]_{\mathfrak{m}}\cong K_{R_{\mathfrak{m}}}$ and $C_{\mathfrak{m}}$ is Ulrich as an $R_{\mathfrak{m}}$-module, the local ring $R_{\mathfrak{m}}$ is almost Gorenstein. 

$(3)\Rightarrow(1)$~We may assume $n=3$ and $t=1$. Remember that $R$ is Cohen-Macaulay integral domain with $\dim R=3$ (\cite[Theorem 1]{Kutz}). By taking a parameter ideal $Q=(x_{11},x_{12}, x_{33})$ of $R$, we have $Q\neq \mathfrak{m}$ and $\mathfrak{m}^2=Q\mathfrak{m}$. Hence, $a=1-d=-2$. Then, by Theorem \ref{thm2}, $R$ is an almost Gorenstein graded ring. 

$(2)\Rightarrow(3)$~ 
Set $A=R_{\mathfrak{m}}$ and $\mathfrak{n}=\mathfrak{m}A$. We choose an exact sequence 
$$
0\rightarrow A \xrightarrow{\varphi} K_A \rightarrow C\rightarrow 0
$$
of $A$-modules such that $\mu_A(C) = e_{\mathfrak{n}}^0(C)$. We may assume $n-t$ is even, i.e., $A$ is not a Gorenstein ring. Then, because $\varphi(1)\not\in\mathfrak{n}K_A$, we get $ \mu_A(C)= r(A)-1$ and
$$
0 \to \mathfrak{n} \varphi(1) \to \mathfrak{n} K_A \to \mathfrak{n} C \to 0
$$
is an exact sequence of $A$-modules. Here $r(A)$ denotes the Cohen-Macaulay type of $A$. Hence we get an inequalities
$$
\mu_A(\mathfrak{n}K_A) \leq \mu_A(\mathfrak{n})+\mu_A(\mathfrak{n}C) \leq \dfrac{n(n+1)}{2}+(d-1)(r(A)-1)
$$
where the second inequality follows from $\mathfrak{n} C = (f_1, f_2, \ldots, f_{d-1})C$ for some $f_i \in \mathfrak{n}$. This choice is possible, because $\sk$ is an infinite field; see \cite[Proposition 2.2 (2)]{GTT}.

Let 
$$
0 \to F_{\ell} \to F_{{\ell}-1} \to \cdots \to F_0 \to R \to 0
$$
be a graded minimal $S$-free resolution of $R$. By taking the $S$-dual of the resolution, we get the presentation of the graded canonical module $K_R$. If ${\rm char}(\sk)=0$, then
$$
(\dagger) \ \ \ \mu_R(\mathfrak{m}K_R)\geq \mu_R(\mathfrak{m})\cdot \rank F_{\ell}- \rank F_{{\ell}-1}
$$
because all the entries of the matrix corresponding to the map $F_{\ell} \to F_{{\ell}-1}$ have degree one; see \cite[Proposition 2.5]{T}. Note that the Hilbert series of $R$ does not depend on the field, so is the Hilbert series of $K_R$. Since $R$ is homogeneous and level, we conclude that $\mu_R(\mathfrak{m}K_R)$ does not depend on the characteristic of the field $k$. Hence the above inequality $(\dagger)$ holds for any characteristic of $k$.
Therefore we get the inequality 
$$
\dfrac{n(n+1)}{2}\dbinom{n}{t}-n\dbinom{n}{t+1}+\dbinom{n}{t+2}\leq \dfrac{n(n+1)}{2}+(d-1)\left(\dbinom{n}{t}-1\right). 
$$   
Putting $d=nt-\dfrac{t(t-1)}{2}$, we have 
$$
(\sharp) \ \ \  \left(\binom{n}{t}-1\right)\left( \dfrac{n(n+1)}{2}-nt+\dfrac{t(t-1)}{2}+1\right)\leq n\dbinom{n}{t+1}-\dbinom{n}{t+2}. 
$$
By multiplying both sides by $(t+2)!$ in the above inequality, we obtain
$$
t(t+1)\leq 6
$$ 
which yields $t\leq 2$. Suppose $t=2$. Then
$$
\left(\binom{n}{2}-1\right)\left(\dfrac{n(n+1)}{2}-2n+2\right)\leq n\dbinom{n}{3}-\dbinom{n}{4}.
$$
Hence $n^2-5n+8\leq 0$ by multiplying $4!$ in the above inequality. This makes a contradiction. Therefore $t=1$. By $(\sharp)$, a direct computation shows $n-t \le 2$. As $n-t$ is even, then $n-t=2$, and hence $n=3$. 
\end{proof}

As a consequence of Theorem~\ref{mainthm}, we have the following.

\begin{corollary}
The local ring $\sk[[X]]/{I_{t+1}(X)}$ is almost Gorenstein  if and only if either $n-t$ is odd or $n=3$, $t=1$.
\end{corollary}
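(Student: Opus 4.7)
The plan is to deduce this corollary from Theorem~\ref{mainthm} by a standard completion argument. Let $R = S/I_{t+1}(X)$ and let $\mathfrak{m}$ denote its graded maximal ideal, as in Setup~\ref{Setup1}. The first step is to identify the power series quotient $\sk[[X]]/I_{t+1}(X)$ with the $\mathfrak{m}$-adic completion $\widehat{R_\mathfrak{m}}$ of the local ring $R_\mathfrak{m}$. Since the graded maximal ideal of the polynomial ring $S$ is generated by the variables $X_{ij}$, its completion is $\sk[[X]]$; and since $I_{t+1}(X)$ is a homogeneous ideal contained in that maximal ideal, exactness of completion yields $\widehat{R_\mathfrak{m}} \cong \sk[[X]]/I_{t+1}(X)$.

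The second step is to invoke the well-known fact that the almost Gorenstein property, in the sense of Definition~\ref{3.1}, is stable under the completion homomorphism $R_\mathfrak{m} \to \widehat{R_\mathfrak{m}}$ in both directions. The point is that $R_\mathfrak{m} \to \widehat{R_\mathfrak{m}}$ is faithfully flat with trivial, hence Gorenstein, closed fiber; the canonical module commutes with this base change, and the numerical invariants $e_\mathfrak{n}^0(-)$ and $\mu(-)$ of a finitely generated Cohen--Macaulay module are both preserved under such a flat extension. Consequently, an Ulrich cokernel remains Ulrich, and the defining exact sequence ascends and descends between $R_\mathfrak{m}$ and $\widehat{R_\mathfrak{m}}$. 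This kind of faithfully flat ascent/descent for the almost Gorenstein property is a documented feature of the definition introduced in \cite{GTT}.

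Combining these two observations with the equivalence $(2)\Leftrightarrow(3)$ of Theorem~\ref{mainthm} yields the stated conclusion: $\sk[[X]]/I_{t+1}(X)$ is almost Gorenstein if and only if $R_\mathfrak{m}$ is almost Gorenstein, and this happens precisely when either $n-t$ is odd or $n=3$, $t=1$. No substantive obstacle is expected. The only point that requires care is verifying the completion equivalence in the precise form of Definition~\ref{3.1}, and this is a routine check given that canonical modules, multiplicities, and minimal numbers of generators all behave well under $\mathfrak{m}$-adic completion of a Cohen--Macaulay local ring admitting a canonical module.
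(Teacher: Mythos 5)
Your argument is correct and is essentially the route the paper intends: the paper states this corollary without proof as a consequence of Theorem~\ref{mainthm}, implicitly using exactly your identification $\sk[[X]]/I_{t+1}(X)\cong \widehat{R_{\mathfrak m}}$ together with the ascent/descent of the almost Gorenstein local property along the completion map (which is established in \cite{GTT} for flat local maps with regular closed fiber). Nothing further is needed.
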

%

%

\if0

\subsection{Almost Gorenstein property  associated with square free monomials}

 \begin{example} {\rm \cite[Example 3.15]{GTT}} Let $S=k[[x_1,\ldots,x_n]]$ be a formal power series ring over a field $k$ and let 
	$\mathbb M=\begin{bmatrix} x_1 & x_2 & \cdots & x_n \\ a_1x_1& a_2 x_2 & \cdots & a_nx_n \end{bmatrix}$ be a matrix. Set $R=S/I_2(\mathbb M)$. Then $R$ is an almost Gorenstein local ring. \\
	
Question: How does this follow from Example 3.15 in AG - Towards a theory of higher dimension  which says the following?	
\end{example}

\begin{example} {\rm \cite[Example 3.15]{GTT}} Let $S=k[[X,Y,Z]]$ be a formal power series ring over a field $k$ and let 
$\mathbb M=\begin{bmatrix} f_{11} & f_{12} & f_{13} \\ f_{21} & f_{22} & f_{23} \end{bmatrix}$ be a matrix such that $f_{ij}\in kX+kY+kZ$ for each $1\leq i \leq 2$ and $1\leq j \leq 3$. Assume that $\text{ht}_S I_2(\mathbb M)=2$ and set $R=S/I_2(\mathbb M)$. Then $R$ is an almost Gorenstein local ring if and only if $R\neq S/(Y, Z)^2$.
\end{example}

Let $R=k[x_1, \ldots, x_n]$ be a polynomial ring over $k$. Let $F=R^n$ and $G=R^m$, where $m\leq n$. 
The divided power algebra $DG$ of $G$ is the direct sum $DG=\oplus_{i\geq 0} D_iG$ where $D_iG$ is the ith divided power of $G$. 
  
 Let $\alpha: F\rightarrow G$ be an $R$-linear map.  Then the Eagon-Northcott complex with respect to $\alpha$ is 
 \begin{align*}0 \xrightarrow{}  D_{n-m}G\otimes \wedge^n F^* &\xrightarrow{d_{n-m}} D_{n-m-1} G\otimes \wedge^{n-1} F^* \xrightarrow{d_{n-m-1}} D_{n-m-2} G\otimes \wedge^{n-2} F^* \xrightarrow{} \cdots \\
& \cdots \xrightarrow{} D_2G\otimes \wedge^{m+2}F^* \xrightarrow{} G\otimes \wedge^{m+1}F^*\xrightarrow{d_2} \wedge^m F^* \xrightarrow{} \wedge^m G 
 \end{align*}


Note that the last shift is $-n$.
Regularity of $R/I_{m,n}$ is $m-1$ and $\dim R/I_{m,n}=m-1$. 


\fi


\begin{thebibliography}{11}

\bibitem{BF97} V.~Barucci, R.~Fr\"{o}berg, \emph{One-dimensional almost Gorenstein rings}, J. Algebra, {\bf 188} (1997), 418--442.

\bibitem{Boffi-Sanchez}, G.~Boffi, R.~S\'anchez, \emph{On the resolutions of the powers of the Pfaffian ideals}, J. Algebra, {\bf 152} (1992), 463--491.

\bibitem{CP76} C.~D.~ Consini, C. Procesi, \emph{A characteristic free approach to invariant theory}, Adv. Math, {\bf 21} (1976) 330--354.
\bibitem{Goto79} S.~Goto, \emph{On the Gorensteinness of determinantal loci}, J. Math. Kyoto Univ., {\bf 19} (1979) 371--374.
\bibitem{GMP} S.~Goto, N.~Matsuoka, T.T.~Phuong, \emph{Almost Gorenstein rings}, J. Algebra, {\bf 379} (2013) 263--278
\bibitem{GMTY2} S. ~Goto, N. ~Matsuoka, N. ~Taniguchi, ~K.-i. Yoshida, \emph{The almost Gorenstein Rees algebras of parameters}, J. Algebra, {\bf 452} (2016), 263--278.
\bibitem{GTT} S.~Goto, R.~ Takahashi, N.~ Taniguchi, \emph{Almost Gorenstein rings towards a theory of higher dimension.}, J. Pure Appl. Algebra, {\bf 219} (2015) 2666--2712.
\bibitem{H} A.~Higashitani, \emph{Almost Gorenstein homogeneous rings and their h-vectors}, J. Algebra, {\bf 456} (2016) 190--206.
\bibitem{Kutz} R.~E.~ Kutz, \emph{Cohen-Macaulay rings and ideal theory in rings of invariants of algebraic groups}, Trans. Amer. Math. Soc. , {\bf 194} (1974) 115--129. 
\bibitem{Perlman} M.~Perlman, \emph{Regularity and cohomology of Pfaffian thickenings}, J. Comm. Algebra, (to appear).
\bibitem{T} N.~Taniguchi, \emph{On the almost Gorenstein property of determinantal rings}, Comm. Algebra, {\bf 46} (2018) 1165--1178.
\bibitem{W} J.~Weyman, \emph{Cohomology of vector bundles and syzygies}, Cambridge University Press, (2003).

\end{thebibliography}
\end{document}